\newtheorem{theorem}{Theorem}[section]
\newtheorem{proposition}[theorem]{Proposition}
\newtheorem{corollary}[theorem]{Corollary}
\theoremstyle{definition}
\theoremstyle{remark}
\newcommand{\be}{\begin{equation}}
\newcommand{\ee}{\end{equation}}
\newcommand{\dz}{\wedge}
\newcommand{\ba}{\begin{array}}
\newcommand{\ea}{\end{array}}
\newcommand{\beq}{\begin{eqnarray}}
\newcommand{\eeq}{\end{eqnarray}}
\newtheorem{lm}{lemma}
\newtheorem{thee}{theorem}
\newtheorem{proo}{proposition}
\newtheorem{co}{corollary}
\newtheorem{rem}{remark}
\newtheorem{deff}{definition}
\newcommand{\bd}{\begin{deff}}
\newcommand{\ed}{\end{deff}}
\newcommand{\bl}{\begin{lm}}
\newcommand{\el}{\end{lm}}
\newcommand{\bp}{\begin{proo}}
\newcommand{\ep}{\end{proo}}
\newcommand{\bt}{\begin{thee}}
\newcommand{\et}{\end{thee}}
\newcommand{\bc}{\begin{co}}
\newcommand{\ec}{\end{co}}
\newcommand{\brm}{\begin{rem}}
\newcommand{\erm}{\end{rem}}
\newcommand{\der}{{\rm d}}
\def\frak{\mathfrak}
\newcommand{\newc}{\newcommand}
\let\ccdot\cdot
\def\cdot{\hbox to 2.5pt{\hss$\ccdot$\hss}}
\newc{\aR}{\mbox{\boldmath{$ R$}}}
\newc{\aS}{\mbox{\boldmath{$ S$}}}
\newc{\aT}{\mbox{\boldmath{$ T$}}}
\newc{\aW}{\mbox{\boldmath{$ W$}}}
\newc{\aK}{\mbox{\boldmath{$ K$}}}
\newc{\aL}{\mbox{\boldmath{$ L$}}}
\newcommand{\bma}{\begin{pmatrix}}
\newcommand{\ema}{\end{pmatrix}}
\newc{\obstrn}[2]{B^{#1}_{#2}}
\newcommand{\rpl}                         
{\mbox{$
\begin{picture}(12.7,8)(-.5,-1)
\put(0,0.2){$+$}
\put(4.2,2.8){\oval(8,8)[r]}
\end{picture}$}}
\newcommand{\lpl}                         
{\mbox{$
\begin{picture}(12.7,8)(-.5,-1)
\put(2,0.2){$+$}
\put(6.2,2.8){\oval(8,8)[l]}
\end{picture}$}}
\newc{\tensor}[1]{#1}
\newc{\Mvariable}[1]{\mbox{#1}}
\newc{\down}[1]{{}_{#1}}
\newc{\up}[1]{{}^{#1}}
\newc{\JulyStrut}{\rule{0mm}{6mm}}
\newc{\midtenPan}{\mbox{\sf S}}
\newc{\midten}{\mbox{\sf T}}
\newc{\midtenEi}{\mbox{\sf U}}
\newc{\ATen}{\mbox{\sf E}}
\newc{\BTen}{\mbox{\sf F}}
\newc{\CTen}{\mbox{\sf G}}
\def\sideremark#1{\ifvmode\leavevmode\fi\vadjust{\vbox to0pt{\vss
 \hbox to 0pt{\hskip\hsize\hskip1em
 \vbox{\hsize3cm\tiny\raggedright\pretolerance10000
 \noindent #1\hfill}\hss}\vbox to8pt{\vfil}\vss}}}%
\newcommand{\Span}{\mathrm{Span}}
\numberwithin{equation}{section}
\newcounter{romenumi}
\newcommand{\labelromenumi}{(\roman{romenumi})}
\newcommand{\bbT}{\mathbb{T}}
\begin{document}
\title[Distributions, $7^{th}$ order ODE and Pleba\'nski metric]{Symmetric $(2,3,5)$ distributions, an interesting ODE of $7^{th}$ order and Pleba\'nski metric}

\vskip 1.truecm
\author{Daniel An} \address{SUNY Maritime College 6 Pennyfield Avenue, Throggs Neck, New York 10465}
\email{dan@sunymaritime.edu}
\author{Pawe\l~ Nurowski} \address{Centrum Fizyki Teoretycznej,
Polska Akademia Nauk, Al. Lotnik\'ow 32/46, 02-668 Warszawa, Poland}
\email{nurowski@cft.edu.pl}
\thanks{This research was supported by the
  Polish National Science Centre under the grant DEC-2013/09/B/ST1/01799.}
\dedicatory{Dedicated to the memory of Sotirios Bonanos}
\date{\today}
\begin{abstract}
We show that the unique $7^{th}$ order ODE having 10 contact symmetries appears naturally in the theory of generic 2-distributions in dimension five.
\end{abstract}
\maketitle
\vspace{-1truecm}
\newcommand{\bbS}{\mathbb{S}}
\newcommand{\bbR}{\mathbb{R}}
\newcommand{\sog}{\mathbf{SO}}
\newcommand{\slg}{\mathbf{SL}}
\newcommand{\og}{\mathbf{O}}
\newcommand{\soa}{\frak{so}}
\newcommand{\sla}{\frak{sl}}
\newcommand{\sua}{\frak{su}}
\newcommand{\dr}{\mathrm{d}}
\newcommand{\sug}{\mathbf{SU}}
\newcommand{\gat}{\tilde{\gamma}}
\newcommand{\Gat}{\tilde{\Gamma}}
\newcommand{\thet}{\tilde{\theta}}
\newcommand{\Thet}{\tilde{T}}
\newcommand{\rt}{\tilde{r}}
\newcommand{\st}{\sqrt{3}}
\newcommand{\kat}{\tilde{\kappa}}
\newcommand{\kz}{{K^{{~}^{\hskip-3.1mm\circ}}}}
\newcommand{\bv}{{\bf v}}
\newcommand{\di}{{\rm div}}
\newcommand{\curl}{{\rm curl}}
\newcommand{\cs}{(M,{\rm T}^{1,0})}
\newcommand{\tn}{{\mathcal N}}
\section{Introduction}
Recently Dunajski and Sokolov have found \cite{DS} a general solution to an interesting $7^{th}$ order ODE:
$$10{y^{(3)}}^3y^{(7)}-70{y^{(3)}}^2y^{(4)}y^{(6)}-49{y^{(3)}}^2{y^{(5)}}^2+280y^{(3)}{y^{(4)}}^2y^{(5)}-175{y^{(4)}}^4=0.$$
This equation can be characterized as a unique (modulo contact transformations of variables) $7^{th}$ order ODE which has a 10-dimensional group of local contact symmetries \cite{noth,olver,sok}. As mentioned by Dunajski and Sokolov this equation was known already in 1904 by Noth. Since we cannot find any earlier reference to this equation we will call this \emph{Noth's equation} in the following.

In this short note we show that Noth's equation also turns out to be a natural geometric condition for a certain class of generic 2-distributions in dimension five.  

We say that a 2-distribution ${\mathcal D}=\Span(X_4,X_5)$, 
where $X_4$ and $X_5$ are two vector fields on a 5-dimensional manifold $M$, is \emph{generic}, or $(2,3,5)$ on $M$, if the system of five vector fields  
$$(~X_1,X_2,X_3,X_4,X_5~)~=~(~[X_5,[X_4,X_5]],~[X_4,[X_4,X_5]],~ [X_4,X_5],~X_4, ~X_5~)$$
forms a frame on $M$. Locally, a generic 2-distribution $\mathcal D$ on $M$ can be defined as the annihilator of three 1-forms $(\omega_1,\omega_2,\omega_3)$ on $M$, defined in terms of a single real function $f=f(x,y,p,q,z)$, such that $f_{qq}\neq 0$, via:
\be
\omega_1=\der y-p\der x,\quad\quad\omega_2=\der p-q\der x,\quad\quad\omega_3=\der z-f(x,y,p,q,z)\der x.\label{fog}\ee
Here $(x,y,p,q,z)$ is a local coordinate system in $M$. 

The local geometry of $(2,3,5)$ distributions is nontrivial: there exist generic distributions ${\mathcal D}_1$ and ${\mathcal D}_2$ on $M$ which do \emph{not} admit a local diffeomorphism $\varphi:M\to M$ such that $\varphi_*{\mathcal D}_1={\mathcal D}_2$. For example, distributions corresponding to a function $f=q^2$ and $f=q^3$ in (\ref{fog}) do not admit such a diffeomorphism. In such case we say that they are \emph{locally nonequivalent}. The full set of differential invariants of $(2,3,5)$ distributions considered modulo local diffeomorphism was given by Cartan in \cite{cartan}. For each $(2,3,5)$ distribution he associated a Cartan connection, with values in the split real form of the exceptional Lie algebra $\mathfrak{g}_2$, whose curvature provided the invariants. These invariants can be also understood in terms of a certain conformal class of metrics \cite{nurdif}, defined on $M$ by $\mathcal D$. This conformal class is defined as follows:

Let $\mathcal D$ be defined as the annihilator of 1-forms $(\omega_1,\omega_2,\omega_3)$, as e.g. in (\ref{fog}). We supplement them by the 1-forms $\omega_4$ and $\omega_5$, in such a way that $\omega_1\dz\omega_2\dz\omega_3\dz\omega_4\dz\omega_5\neq 0$. In case of (\ref{fog}) we take $\omega_4=\der q$ and $\omega_5=\der x$. Consider the forms $(\theta^1,\theta^2,\theta^3,\theta^4,\theta^5)$ defined on $M$ via:
\be \label{matrix}
\bma\theta^1\\\theta^2\\\theta^3\\\theta^4\\\theta^5\ema=\bma b_{11}&b_{12}&b_{13}&0&0\\
b_{21}&b_{22}&b_{23}&0&0\\
b_{31}&b_{32}&b_{33}&0&0\\
b_{41}&b_{42}&b_{43}&b_{44}&b_{45}\\
b_{51}&b_{52}&b_{53}&b_{54}&b_{55}
\ema\bma\omega^1\\\omega^2\\\omega^3\\\omega^4\\\omega^5\ema,
\ee
with some functions $b_{ij}$, $i,j=1,2,\dots,5$, on $M$ such that $\theta^1\dz\theta^2\dz\theta^3\dz\theta^4\dz\theta^5\neq 0$. 
It follows that for a $(2,3,5)$ distribution $\mathcal D$ one can always find functions $b_{ij}$ and 1-forms $\Omega_\mu$, $\mu=1,2,\dots,7$, on $M$ such that  
\be \label{system}
\begin{aligned}
&\der\theta^1=\theta^1\dz(2\Omega_1+\Omega_4)+\theta^2\dz\Omega_2+\theta^3\dz\theta^4\\
&\der\theta^2=\theta^1\dz\Omega_3+\theta^2\dz(\Omega_1+2\Omega_4)+\theta^3\dz\theta^5\\
&\der\theta^3=\theta^1\dz\Omega_5+\theta^2\dz\Omega_6+\theta^3\dz(\Omega_1+\Omega_4)+\theta^4\dz\theta^5\\
&\der\theta^4=\theta^1\dz\Omega_7+\tfrac43\theta^3\dz\Omega_6+\theta^4\dz\Omega_1+\theta^5\dz\Omega_2\\
&\der\theta^5=\theta^2\dz\Omega_7-\tfrac43\theta^3\dz\Omega_5+\theta^4\dz\Omega_3+\theta^5\dz\Omega_4.
\end{aligned}
\ee
And now, it turns out that the $(3,2)$-signature conformal class $[g_{\mathcal D}]$ represented on $M$ by the metric 
\be
g_{\mathcal D}=g_{ij}\theta^i\otimes\theta^j=\theta^1\otimes\theta^5+\theta^5\otimes\theta^1-\theta^2\otimes\theta^4-\theta^4\otimes\theta^2+\tfrac43\theta^3\otimes\theta^3\label{met}\ee
is well defined, and that its Weyl tensor can be used to get all the basic differential invariants of the distribution $\mathcal D$. The simplest of these invariants, the so called \emph{Cartan's quartic} $C(\zeta)$ of $\mathcal D$, can be obtained in terms of the Weyl tensor of the conformal class $[g_{\mathcal D}]$ as follows \cite{twist,Gra}:

Calculate the Weyl tensor $W=W_{ijkl}\theta^i\otimes\theta^j\otimes\theta^k\otimes\theta^l$ for the metric $g_{\mathcal D}$ in the coframe $(\theta^1,\theta^2,\theta^3,\theta^4,\theta^5)$. (Use the metric $g_{\mathcal D}$ to lower the index $i$ from the natural placement $W^i_{~jkl}$ to $W_{ijkl}$, $W_{ijkl}=g_{ip}W^p_{~jkl}$). Then Cartan's quartic for $\mathcal D$ is 
$$\begin{aligned}
C(\zeta)~:=~&A_1+4A_2\zeta+6A_3\zeta^2+4A_4\zeta^3+A_5\zeta^4~
\end{aligned}$$
with the functions $A_I$, $I=1,2,\dots,5$, given by:
$$
A_1=W_{4114},\quad  A_2=W_{4124},\quad A_3=W_{4125},\quad A_4=W_{4225},\quad   A_5=W_{5225}.
$$

The simplest equivalence class of $(2,3,5)$ distributions corresponds to the vanishing of Cartan's quartic, $C(\zeta)\equiv 0$, or equivalently, $A_I\equiv 0$ for all $I=1,2,\dots,5$. Modulo local diffeomorphisms there is only one such distribution $\mathcal D$. It may be represented by (\ref{fog}) with $f=q^2$. This distribution has maximal group of local symmetries. This group is isomorphic to the split real form of the exceptional group $G_2$ \cite{cartan}. 

This provokes a problem: find all functions $f=f(x,y,p,q,z)$, which via (\ref{fog}), define a generic distribution $\mathcal D$, which is locally diffeomorphically equivalent to the most symmetric one, the one with $f=q^2$.

The general solution to this problem requires rather elaborate calculations, and it follows that the PDEs required for $f$ to correspond to vanishing $A_I$s are quite ugly. However in the restricted case when the function $f$ depends only on a single variable $q$ the solution is quite nice (see \cite{nurdif}, equation (57)). For completeness we recall this solution in the next section.

\section{Cartan quartic for the distribution with $f=f(q)$}   
If the distribution is given as the annihilator of 
\be\begin{aligned}
\omega_1&=\der y-p\der x\\
\omega_2&=\der p-q\der x\\
\omega_3&=\der z-f(q)\der x,\end{aligned}\label{ff}\ee
the conformal class $[g_{\mathcal D}]$ can be represented by (\ref{met}), with the forms $(\theta^1,\theta^2,\theta^3,\theta^4,\theta^5)$ given by:
$$\begin{aligned}
\theta^1&=\omega_1-\frac{1}{f''}(f'\omega_2-\omega_3)\\
\theta^2&=\frac{1}{f''}(f'\omega_2-\omega_3)\\
\theta^3&=\frac{4{f''}^2-f'f^{(3)}}{4{f''}^2}\omega_2+\frac{f^{(3)}}{4{f''}^2}\omega_3\\
\theta^4&=\frac{(7{f^{(3)}}^2-4f''f^{(4)})}{40{f''}^3}(f'\omega_2-\omega_3)+\omega_4-\omega_5\\
\theta^5&=-\omega_4,
\end{aligned}
$$
where
$$\omega_4=\der q,\quad\quad\omega_5=\der x.$$
With this choice of $\theta^i$s the Cartan quartic is
$$C(\zeta)=\frac{a_5}{100{f''}^4}\zeta^4$$
with
$$a_5=10f^{(6)}{f''}^3-80{f''}^2f^{(3)}f^{(5)}-51{f''}^2{f^{(4)}}^2+336f''{f^{(3)}}^2f^{(4)}-224{f^{(3)}}^4.$$
We see, in particular, that the only nonvanishing component of Cartan's quartic is $A_5$, and that the quartic has a \emph{quadruple} root, which makes it of type $IV$, in the terminology of \cite{franco}.
  
We have the following corollary.
\begin{corollary}
Necessary and sufficient conditions for the distribution 
$${\mathcal D}=\Span(\partial_q,\partial_x+p\partial_y+q\partial_p+f(q)\partial_z)$$ 
to have split real form of the exceptional Lie group $G_2$ as a group of its local symmetries are:
\be 
f''\neq 0\label{ope}\ee
and
\be
10f^{(6)}{f''}^3-80{f''}^2f^{(3)}f^{(5)}-51{f''}^2{f^{(4)}}^2+336f''{f^{(3)}}^2f^{(4)}-224{f^{(3)}}^4=0.\label{a5}\ee
\end{corollary}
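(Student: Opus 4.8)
The plan is to reduce the corollary to the explicit computation of Cartan's quartic already carried out in Section~2, combined with the classical characterization of the maximally symmetric model. The statement then splits into three essentially independent pieces, and most of the work has already been done in the Introduction and Section~2.

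First I would check that condition (\ref{ope}) is exactly the requirement that $\mathcal{D}$ be a $(2,3,5)$ distribution, since only for such distributions is the whole conformal/Cartan apparatus of the Introduction available. Writing $X_4=\partial_q$ and $X_5=\partial_x+p\partial_y+q\partial_p+f(q)\partial_z$, a direct computation of the iterated Lie brackets occurring in the definition of genericity gives
\[
[X_4,X_5]=\partial_p+f'\partial_z,\qquad [X_4,[X_4,X_5]]=f''\partial_z,\qquad [X_5,[X_4,X_5]]=-\partial_y.
\]
Hence the five fields $(X_1,\dots,X_5)$ are $(-\partial_y,\ f''\partial_z,\ \partial_p+f'\partial_z,\ \partial_q,\ X_5)$, whose determinant with respect to the coordinate coframe $(\der x,\der y,\der p,\der q,\der z)$ equals $-f''$. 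They therefore form a frame on $M$ precisely when $f''\neq 0$, so (\ref{ope}) is equivalent to $\mathcal{D}$ being generic.

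Second, I would invoke the characterization of the flat model. By Cartan's classification \cite{cartan} (see also \cite{nurdif}), a $(2,3,5)$ distribution has the split real form of $G_2$ --- the maximal, $14$-dimensional symmetry algebra --- as its group of local symmetries if and only if it is locally diffeomorphic to the model with $f=q^2$, and this happens if and only if its fundamental invariant, Cartan's quartic $C(\zeta)$, vanishes identically, i.e. $A_I\equiv 0$ for all $I$. This is the one substantive external input: it rests on the fact that the quartic is the complete harmonic obstruction, so that its vanishing forces the whole $\mathfrak{g}_2$-valued Cartan curvature to vanish, hence local flatness.

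Finally, I would feed in the Section~2 formula. There the quartic was shown to collapse, for $f=f(q)$, to $C(\zeta)=\tfrac{a_5}{100\,{f''}^4}\,\zeta^4$, with $A_5=a_5/(100\,{f''}^4)$ its only possibly nonzero coefficient and $a_5$ the left-hand side of (\ref{a5}). Since $f''\neq 0$ under (\ref{ope}), the identity $C(\zeta)\equiv 0$ holds if and only if $a_5=0$, that is, if and only if (\ref{a5}) holds. Combining the three steps yields both implications: $\mathcal{D}$ has split real $G_2$ as its group of local symmetries exactly when $f''\neq 0$ and (\ref{a5}) is satisfied. The only genuine difficulty is the cited equivalence between the vanishing of the quartic and local flatness; granting it, the corollary is immediate from the Section~2 computation.
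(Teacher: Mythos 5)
Your proof is correct and takes essentially the same route as the paper, which presents the corollary as an immediate consequence of the Section~2 computation $C(\zeta)=\frac{a_5}{100{f''}^4}\zeta^4$ combined with the facts recalled in the Introduction (vanishing of Cartan's quartic characterizes the unique locally flat class, represented by $f=q^2$, whose local symmetry group is the split real form of $G_2$). Your explicit Lie-bracket verification that $f''\neq 0$ is exactly the genericity condition merely spells out what the paper asserts via the requirement $f_{qq}\neq 0$ in the Introduction.
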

Thus apart from the genericity condition (\ref{ope}) the function $f$ must satisfy quite a complicated $6^{th}$ order ODE (\ref{a5}).

Strangely enough, this ODE is closely related to the Noth equation, studied by Dunajski and Sokolov, and mentioned in the Introduction.
We have the following proposition.
\begin{proposition}\label{pt}
Suppose that a real, sufficiently many times differentiable, function $f=f(q)$ satisfies equation (\ref{a5}). Let $\Theta=\Theta(x_5)$ be another real, sufficiently many times differentiable, function of a real variable $x_5$, whose second and third derivatives with respect to $x_5$ are related to $f$ via an equation:
\be
f(-\Theta^{(3)})+x_5 \Theta^{(3)}-\Theta''=0.\label{fthe}\ee Assume in addition that $\Theta^{(4)}\neq 0$. Then the function $\Theta=\Theta(x_5)$ satisfies the following $8^{th}$ order ODE:
$$10{\Theta^{(4)}}^3\Theta^{(8)}-70{\Theta^{(4)}}^2\Theta^{(5)}\Theta^{(7)}-49{\Theta^{(4)}}^2{\Theta^{(6)}}^2+280\Theta^{(4)}{\Theta^{(5)}}^2\Theta^{(6)}-175{\Theta^{(5)}}^4=0.$$
\end{proposition}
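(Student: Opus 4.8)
The plan is to read the relation (\ref{fthe}) as a \emph{Legendre transform} between $f$ and $\Theta$, and to turn it into explicit formulas for the successive derivatives $\Theta^{(4)},\dots,\Theta^{(8)}$ in terms of $f'',f^{(3)},\dots,f^{(6)}$. First I would differentiate (\ref{fthe}) once in $x_5$. Writing $q:=-\Theta^{(3)}$, the relation becomes $f(q)=\Theta''-x_5\Theta^{(3)}$; since $\der q/\der x_5=-\Theta^{(4)}$, differentiation gives $-f'(q)\,\Theta^{(4)}=-x_5\,\Theta^{(4)}$, and the hypothesis $\Theta^{(4)}\neq 0$ collapses this to the clean conjugacy $x_5=f'(q)$. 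Thus $q$ and $x_5$ are Legendre-dual through $f'$, with $\der x_5/\der q=f''(q)$.

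Next I would set up the chain rule and iterate. From $x_5=f'(q)$ one has $\der/\der x_5=(1/f'')\,\der/\der q$, while $q=-\Theta^{(3)}$ together with $\der q/\der x_5=1/f''$ gives the base case $\Theta^{(4)}=-1/f''$. Applying $(f'')^{-1}\der/\der q$ repeatedly produces $\Theta^{(5)}=f^{(3)}/(f'')^{3}$, $\Theta^{(6)}=\big(f''f^{(4)}-3(f^{(3)})^2\big)/(f'')^{5}$, and similarly $\Theta^{(7)}$ and $\Theta^{(8)}$ as polynomials in $f'',\dots,f^{(6)}$ divided by $(f'')^{7}$ and $(f'')^{9}$ respectively; each further step raises the denominator by two powers of $f''$.

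The final step is purely algebraic: substitute these five expressions into the left-hand side of the claimed $8^{th}$ order ODE. By the homogeneity just noted every one of the five terms acquires the common denominator $(f'')^{12}$, so it suffices to collect the numerators. I expect them to sum to exactly $-a_5$, where $a_5$ denotes the degree-four differential polynomial on the left of (\ref{a5}); that is, the whole expression equals $-a_5/(f'')^{12}$. Since $f$ is assumed to satisfy (\ref{a5}), i.e. $a_5=0$, while $f''\neq 0$, the expression vanishes identically and the proposition follows.

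The only genuine obstacle is the bookkeeping in this last step: carrying four successive applications of the chain rule and then expanding the degree-four polynomial in $\Theta^{(4)},\dots,\Theta^{(8)}$ without sign or coefficient slips; everything structural is already fixed by the conjugacy $x_5=f'(q)$. As a consistency check one notes that, setting $y=\Theta'$, the claimed $8^{th}$ order equation is literally the $7^{th}$ order Dunajski--Sokolov equation of the Introduction, so the proposition is really the assertion that the Legendre transform $x_5=f'(q)$ sends solutions of the $6^{th}$ order equation (\ref{a5}) to solutions of that equation.
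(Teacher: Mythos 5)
Your proposal is correct and is essentially the paper's own proof: both arguments differentiate (\ref{fthe}) once, use $\Theta^{(4)}\neq 0$ to extract the key relation $f'=x_5$, and then iterate the chain rule on this Legendre-type duality, concluding from the algebraic identity that the left-hand side of the $8^{th}$ order ODE equals $-a_5/(f'')^{12}$ (your predicted common denominator $(f'')^{12}$ and the numerator summing to exactly $-a_5$ both check out, and are equivalent to the paper's formula $a_5=-\mathrm{ODE}[\Theta]/{\Theta^{(4)}}^{12}$ since $f''\,\Theta^{(4)}=-1$). The only difference is cosmetic --- the paper expresses $f'',f^{(3)},\dots$ in terms of $\Theta$-derivatives via $f^{(p)}=-\tfrac{1}{\Theta^{(4)}}\tfrac{\der}{\der x_5}f^{(p-1)}$ and substitutes into $a_5$, whereas you invert these relations and substitute $\Theta^{(4)},\dots,\Theta^{(8)}$ into the target ODE, which is the same computation read in the opposite direction.
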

\begin{proof}
The proof consists in a successive differentiation of the equation $f(-\Theta^{(3)})=-x_5 \Theta^{(3)}+\Theta''$ using the chain rule. We have: $-\Theta^{(4)}f'=-\Theta^{(3))}-x_5\Theta^{(4)}+\Theta^{(3)}$,  i.e. $f'=x_5$. Then, in the same way:
$$f^{(p)}=-\frac{1}{\Theta^{(4)}}\frac{\der}{\der x_5}f^{(p-1)}\quad\quad {\rm for}\quad p=2,3,\dots,$$
i.e. $f''=-\frac{1}{\Theta^{(4)}}$, $f^{(3)}=-\frac{\Theta^{(5)}}{{\Theta^{(4)}}^3}$, $f^{(4)}=\frac{\Theta^{(6)}}{{\Theta^{(4)}}^4}-3\frac{{\Theta^{(5)}}^2}{{\Theta^{(4)}}^5}$, etc. Inserting these derivatives of $f$ into the definition of $a_5$ we get 
$$
\begin{aligned}
&a_5=\\
&-\frac{10{\Theta^{(4)}}^3\Theta^{(8)}-70{\Theta^{(4)}}^2\Theta^{(5)}\Theta^{(7)}-49{\Theta^{(4)}}^2{\Theta^{(6)}}^2+280\Theta^{(4)}{\Theta^{(5)}}^2\Theta^{(6)}-175{\Theta^{(5)}}^4}{{\Theta^{(4)}}^{12}}.
\end{aligned}
$$
Thus if the equation $a_5=0$ for $f$ is satisfied, i.e. if (\ref{a5}) holds, then the function $\Theta=\Theta(x_5)$ satisfies the $8^{th}$ order ODE from the proposition, as claimed.
\end{proof}

Magically, the equation (\ref{a5}), when transformed via (\ref{fthe}) into the $8^{th}$ order ODE from Proposition \ref{pt} and then reduced by one order via $y=\Theta'$, becomes the $7^{th}$ order ODE of Noth. The magic is in a peculiar form of the transformation (\ref{fthe}) relating $f$ and $\Theta$. The geometric reason for this transformation is explained in the next section.
\section{Distribution with $f=f(q)$ as a twistor distribution}
A particular class of $(2,3,5)$ distributions is associated with 4-dimensional split signature metrics. This is carefully explained in \cite{twist}, see Section 2, for every split signature metric. Here we concentrate on a special case, when the metric is given in terms of a one real function of four variables, called \emph{Pleba\'nski second heavenly function}. 

Let $\Theta=\Theta(x,y,z,w)$ be a real, sufficiently smooth, function of four real variables $(x,y,z,w)$ which satisfies the differential equation called \emph{second heavenly equation of Pleba\'nski}:
\be
\Theta_{wx}+\Theta_{zy}+\Theta_{xx} \Theta_{yy} - \Theta_{xy}^2=0.\label{secpleb}
\ee
Such a function enables us to define a 4-metric $g$, on a manifold $\mathcal U$ parametrized by $(x,y,z,w)$, via:
\be
g=\der w\der x+\der z\der y-\Theta_{xx}\der z^2-\Theta_{yy}\der w^2+2\Theta_{xy}\der w \der z.\label{ple}\ee
This is Pleba\'nski's second heavenly metric. It can be written in the form
$$g=\tau^1\otimes\tau^2+\tau^2\otimes\tau^1+\tau^3\otimes\tau^4+\tau^4\otimes\tau^3,$$
where
$$\begin{aligned}
\tau^1&=\der x-\Theta_{yy}\der w+\Theta_{xy}\der z\\
\tau^2&=\der w\\
\tau^3&=\der y-\Theta_{xx}\der z+\Theta_{xy}\der w\\
\tau^4&=\der z.
\end{aligned}
$$ 
Since $g$ has split signature on $\mathcal U$, there is a natural circle bundle $\bbS^1\to \bbT({\mathcal U})\to \mathcal U$ over $\mathcal U$ \cite{twist}. In this bundle, which we call as the \emph{circle twistor bundle} for $({\mathcal U},g)$, every point in the fiber over $x\in \mathcal U$ is a certain \emph{real} totally null selfdual 2-plane at $x$. There is an entire circle of such planes at $x$. The bundle $\bbT({\mathcal U})\stackrel{\pi}{\to}\mathcal U$ is naturally equipped with a 2-dimensional distribution $\mathcal D$. Its plane ${\mathcal D}_p$ at a point $p\in\bbT({\mathcal U})$, which as we know can be identified with a certain real totally null 2-plane $N(p)$ at $\pi(p)$, is the tautological \emph{horizontal} lift of $N(p)$ from $\pi(p)$ to $p$. \emph{Horizontality} in $\bbT({\mathcal U})$ is induced by the Levi-Civita connection of $g$ from $\mathcal U$. (See \cite{twist}, Section 2, for details). In case of the Pleba\'nski metric (\ref{ple}), given in terms of the heavenly function $\Theta=\Theta(x,y,z,w)$, the circle twistor bundle can be locally parametrized by $(x,y,z,w,\xi)$ and the twistor distribution can be defined as the annihilator of the 1-forms 
$$
\begin{aligned}
\tilde{\omega}_1&=\der \xi-\Big((\partial_x+\xi\partial_y)^3\Theta\Big)\der z\\
\tilde{\omega}_2&=\der w+\xi\der z\\
\tilde{\omega}_3&=\der y-\xi\der x-\Big((\partial_x+\xi\partial_y)^2\Theta\Big)\der z.
\end{aligned}
$$
A little tweak (see \cite{franco}, Thm 3.3.5), which we have learned from Ian Anderson \cite{ia}, and which he attributes to Goursat \cite{gou} (see also \cite{bryant}, p.7), consists in introducing new coordinates $(x_1,x_2,x_3,x_4,x_5)$ on $\bbT({\mathcal U})$:
\be \label{coord}
x_1=z,\quad\quad x_2=w,\quad\quad x_3=-\xi,\quad\quad x_4=y-\xi x,\quad\quad x_5=x,\ee
and enables us to conclude that the twistor distribution for the Pleba\'nski metric (\ref{ple}) can equivalently be defined by the annihilator of the forms 
\be \begin{aligned}
\omega_1&=\der x_2-x_3\der x_1\\
\omega_2&=\der x_3+\Theta_{555}\der x_1\\ 
\omega_3&=\der x_4-(\Theta_{55}-x_5\Theta_{555})\der x_1.
\end{aligned}\label{ft}\ee
Here $\Theta_{55}=\frac{\partial^2\Theta}{\partial x_5^2}$, $\Theta_{555}=\frac{\partial^3\Theta}{\partial x_5^3}$, and because $\Theta$ is originally function of only \emph{four} variables $(x,y,z,w)$, we have $\Theta_3+x_5\Theta_4=0$. 

Now we can demystify transformation (\ref{fthe}): Consider the case when the function $\Theta$ is a function of $x$ only, $\Theta=\Theta(x_5)$. Note that in this case the second heavenly equation (\ref{secpleb}) is automatically satisfied. Then, comparing the formulae (\ref{ff}) and (\ref{ft}), we see that the relation between the function $f$ in (\ref{ff}) and the function $\Theta$ in (\ref{ft}) is 
$$q=-\Theta_{555},\quad\quad f(q)=\Theta_{55}-x_5\Theta_{555}.$$
This inevitably leads to 
$$f(-\Theta_{555})=\Theta_{55}-x_5\Theta_{555},$$
which is the relation (\ref{fthe}).

For an explicit derivation of Noth's ODE in terms of the Pleba\'nski second heavenly metric, we find explicit formulae for the conformal class $[g_{\mathcal D}]$ associated with the distribution ${\mathcal D}$ defined by (\ref{ft}) with $\Theta=\Theta(x_5)$. Since for this the function $\Theta$ is a function of \emph{one} variable only, we will denote the derivatives w.r.t. $x_5$ by primes, double primes, etc. 
First we extend the forms (\ref{ft}) by 
\be \label{ft2} \omega_4=\der x_1,\quad\quad\omega_5=\der x_5 \ee
to a coframe on $\bbT({\mathcal U})$, and then find a suitable representatives of the forms $(\theta^1,\theta^2,\theta^3,$ $\theta^4,\theta^5)$ defining, via (\ref{met}), the conformal class $[g_{\mathcal D}]$. These forms can be taken to be:
$$
\begin{aligned}
\theta^1&=\omega_1-\Theta^{(4)}(x_5\omega_2-\omega_3)\\
\theta^2&=\Theta^{(4)}(x_5\omega_2-\omega_3)\\
\theta^3&=-\frac{4\Theta^{(4)}+x_5\Theta^{(5)}}{4\Theta^{(4)}}\omega_2+\frac{\Theta^{(5)}}{4\Theta^{(4)}}\omega_3\\
\theta^4&=-\frac{5{\Theta^{(5)}}^2-4\Theta^{(4)}\Theta^{(6)}}{40{\Theta^{(4)}}^3}(x_5\omega_2-\omega_3)+\omega_4-\Theta^{(4)}\omega_5\\
\theta^5&=\Theta^{(4)}\omega_5,
\end{aligned}
$$
with
$$\begin{aligned}
\omega_1&=\der x_2-x_3\der x_1\\
\omega_2&=\der x_3+\Theta^{(3)}\der x_1\\
\omega_3&=\der x_4-(\Theta''-x_5\Theta^{(3)})\der x_1\\
\omega_4&=\der x_1\\
\omega_5&=\der x_5.
\end{aligned}
$$
It is straightforward now to calculate Cartan's quartic for $g_{\mathcal D}$ with these forms $\theta^i$. It reads:
$$C(\zeta)=-\frac{\alpha_5\zeta^4}{100\Theta^{(4)}},$$
where $\alpha_5$ is given by
$$\alpha_5=10{\Theta^{(4)}}^3\Theta^{(8)}-70{\Theta^{(4)}}^2\Theta^{(5)}\Theta^{(7)}-49{\Theta^{(4)}}^2{\Theta^{(6)}}^2+280\Theta^{(4)}{\Theta^{(5)}}^2\Theta^{(6)}-175{\Theta^{(5)}}^4.$$ 
Thus under the condition $\Theta^{(4)}\neq 0$, Cartan's quartic identically vanishes if and only if $y=\Theta'$ satisfies Noth's ODE. 

We have just proved the following theorem.
\begin{theorem}\label{theo31}
The twistor distribution $\mathcal D$ on the circle twistor bundle $\bbS^1\to\bbT(M)\to M$ of the Pleba\'nski second heavenly manifold $(M,g)$ with the metric 
$$g=\der w\der x+\der z\der y-\Theta''\der z^2$$
and the second heavenly function $\Theta=\Theta(x)$ such that $\Theta^{(4)}\neq 0$,  has the split real form of the exceptional group $G_2$ as a group of its local symmetries if and only if the heavenly function $\Theta$ satisfies the ODE:
$$10{\Theta^{(4)}}^3\Theta^{(8)}-70{\Theta^{(4)}}^2\Theta^{(5)}\Theta^{(7)}-49{\Theta^{(4)}}^2{\Theta^{(6)}}^2+280\Theta^{(4)}{\Theta^{(5)}}^2\Theta^{(6)}-175{\Theta^{(5)}}^4=0.$$
\end{theorem}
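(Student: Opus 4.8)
The plan is to reduce the claim to the symmetry criterion already recorded in the Corollary and then to a single algebraic identity relating the two differential polynomials. The conceptual input I would take as given is Cartan's theorem \cite{cartan}: a $(2,3,5)$ distribution carries the split real form of $G_2$ as its (necessarily maximal) local symmetry group exactly when it is locally diffeomorphic to the flat model, which by the conformal reformulation is equivalent to the identical vanishing of Cartan's quartic $C(\zeta)$, i.e.\ of all five coefficients $A_I$.

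First I would identify the twistor distribution $\mathcal D$ of the theorem with a distribution of the form (\ref{ff}). Specializing the Pleba\'nski metric (\ref{ple}) to $\Theta=\Theta(x)$ annihilates $\Theta_{yy}$ and $\Theta_{xy}$ and leaves $g=\der w\der x+\der z\der y-\Theta''\der z^2$; the Goursat coordinate change $x_1=z$, $x_2=w$, $x_3=-\xi$, $x_4=y-\xi x$, $x_5=x$ then presents the twistor distribution as the annihilator (\ref{ft}). Matching (\ref{ft}) against (\ref{ff}) yields the dictionary $q=-\Theta'''$, $f(q)=\Theta''-x_5\Theta'''$, which is precisely the substitution (\ref{fthe}). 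Thus, as an abstract $(2,3,5)$ distribution, $\mathcal D$ is the one generated by $f$ subject to (\ref{fthe}).

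Next I would transport the criterion through this dictionary. By the Corollary, the $f$-distribution has $G_2$ symmetry iff $f''\neq 0$ and $a_5=0$. The chain-rule computation carried out in the proof of Proposition \ref{pt} evaluates the derivatives of $f$ along (\ref{fthe}); in particular $f''=-1/\Theta^{(4)}$ and, after substitution, one gets the identity $a_5=-\alpha_5/(\Theta^{(4)})^{12}$, where $\alpha_5$ is the octic differential polynomial appearing in the theorem. Being an identity, it gives $f''\neq 0\iff\Theta^{(4)}\neq 0$ and, under that open condition, $a_5=0\iff\alpha_5=0$. Hence $\mathcal D$ has $G_2$ symmetry iff $\Theta^{(4)}\neq 0$ and the $8^{th}$ order ODE holds, which is exactly the assertion. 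Alternatively one may bypass the Corollary entirely and read the conclusion off the explicit coframe $\theta^i$ recorded in Section 3, for which $C(\zeta)=-\alpha_5\zeta^4/(100\Theta^{(4)})$.

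The main obstacle is computational rather than logical, and it sits behind both the Corollary and the explicit coframe: the honest calculation of the conformal class $[g_{\mathcal D}]$ --- i.e.\ exhibiting structure functions $b_{ij}$ that put $\der\theta^i$ into the normalized form displayed in Section 1 --- together with the extraction of the Weyl tensor and its single surviving quartic coefficient $A_5$. That computation is lengthy and I would delegate it to the cited results; once it is in hand, the remaining work is the elementary coordinate identification and the chain-rule identity $a_5=-\alpha_5/(\Theta^{(4)})^{12}$.
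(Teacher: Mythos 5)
Your proposal is correct, and your primary route is genuinely different in organization from the paper's official proof (your ``alternative'' at the end is in fact what the paper does). The paper proves the theorem by a second direct computation on the twistor bundle: it exhibits an adapted coframe $(\theta^1,\dots,\theta^5)$ for the distribution (\ref{ft}) with $\Theta=\Theta(x_5)$ and computes Cartan's quartic there, obtaining $C(\zeta)=-\alpha_5\zeta^4/(100\,\Theta^{(4)})$, from which the statement is read off; the dictionary $q=-\Theta^{(3)}$, $f(q)=\Theta''-x_5\Theta^{(3)}$ appears in the paper only to \emph{motivate} the transformation (\ref{fthe}). You instead compose three ingredients already established: that dictionary, the Corollary's criterion $f''\neq 0$ and $a_5=0$, and the chain-rule identity $a_5=-\alpha_5/(\Theta^{(4)})^{12}$ from the proof of Proposition \ref{pt}. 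This buys economy (no second Weyl-tensor/quartic computation is needed) and makes transparent why (\ref{a5}) and the $8^{th}$ order ODE are the same equation; the paper's recomputation buys explicit formulae for the conformal coframe on $\bbT(\mathcal U)$ and an independent check. Two small points you should make explicit to fully close your route: first, invoking the Corollary requires the dictionary to be a genuine local diffeomorphism, i.e. invertibility of $x_5\mapsto q=-\Theta^{(3)}(x_5)$, which holds exactly where $\Theta^{(4)}\neq 0$; where $\Theta^{(4)}$ vanishes identically there is no $f$-presentation, but there the distribution fails to be $(2,3,5)$ and so cannot carry split $G_2$, so the equivalence survives. Second, the quartic coefficients computed in the two coframes differ by a power of $\Theta^{(4)}$ (the paper gets $a_5/(100{f''}^4)$ in Section 2 but $-\alpha_5/(100\,\Theta^{(4)})$ in Section 3); this is harmless because only the vanishing of $C(\zeta)$ is diffeomorphism-invariant, and it is precisely why your argument should run, as it does, through the identity for the differential polynomial $a_5$ rather than through equality of the quartics themselves.
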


\section{$G_2$ flatness for $\mathcal D$ of Pleba\'nski metrics implies Noth's equation}

The aim of this section\footnote{The first version of this paper appeared on arXiv.org in 2013 without this section. Since then we have expanded our result to general Plebanski metrics. Also, the results of our arXiv.org version of the paper were used by M. Randal in \cite{randall1,randall2}}, is to argue that $G_2$ flatness of the twistor distribution associated with Pleba\'nski second heavenly manifolds $(M,g)$ implies the Noth's equation for the heavenly function $\Theta$. Since the Cartan quartic for the general case looks horrible and is impossible to be displayed here, we present the details of the calculation only in the restricted case when the heavenly function $\Theta$  is independent of $w$ and $z$. We will comment on the general case at the end of this section.

If we assume that $\Theta$ is independent of $w$ and $z$, we have $\Theta_w=0$ and $\Theta_z=0$, which degenerates the second heavenly equation into the form:
\be
\Theta_{xx} \Theta_{yy} - \Theta{xy}^2=0,\label{ma1}
\ee
which is the homogenous Monge-Ampere equation. This is a special case which is convenient to study using the same change of coordinates as before in (\ref{coord}), because in them the differential equation retains its form:
\be
\Theta_{44} \Theta_{55} - \Theta_{45}^2=0.\label{ma}
\ee
Also, the conditions $\Theta_w=\Theta_z=\Theta_\xi=0$ in the original coordinates imply, and are equivalent to, that the function $\Theta$ in the new coordinates satisfies:
$$\Theta_1=\Theta_2=\Theta_3+x_5\Theta_4=0.$$
It is convenient to rewrite the Monge-Ampere equation (\ref{ma}) into equivalent form,  
\be \begin{aligned}
\Theta_{45}&=H \Theta_{55} \\
\Theta_{44}&=H^2 \Theta_{55}, 
\end{aligned}
\ee
which is a system of differential equations for $\Theta$ and a certain differentiable function $H$. Then applying these relations, we can compute the exterior derivatives of $\Theta$, $H$ and their partials as follows: 
$$ \begin{aligned}
&d\Theta &=& -\Theta_4 x_5 dx_3 + \Theta_4 dx_4 + \Theta_5 dx_5\\
&d\Theta_5 &=& (-\Theta_4 - H \Theta_{55} x_5)dx_3 + H \Theta_{55} dx_4 + \Theta_{55} dx_5\\
&d\Theta_4 &=& -H^2 \Theta_{55} x_5 dx_3 + H^2 \Theta_{55} dx_4 + H \Theta_{55} dx_5\\
&d\Theta_{55} &=& -\frac{2 H^2 \Theta_{55} + H H_5 \Theta_{55} x_5 + H^2 \Theta_{555} x_5}{H} dx_3 + \frac{H H_5 \Theta_{55} + H^2 \Theta_{555}}{H} dx_4 \\
  & & &+ \Theta_{555} dx_5 \\
&d\Theta_{555} &=& (-3 H_5 \Theta_{55} - 3 H \Theta_{555} - H_{55} \Theta_{55} x_5 - 2 H_5 \Theta_{555} x_5 -H \Theta_{5555} x_5)dx_3 \\ 
      & & &+ (H_{55} \Theta_{55} + 2 H_5 \Theta_{555} + H \Theta_{5555})dx_4 +  \Theta_{5555} dx_5\\
&d\Theta_{5555} &=& (-4 H_{55} \Theta_{55} - 8 H_5 \Theta_{555} - 4 H \Theta_{5555} - H_{555} \Theta_{55} x_5 \\
      & & &-3 H_{55} \Theta_{555} x_5 - 3 H_5 \Theta_{5555} x_5 - H \Theta_{55555} x_5 )dx_3 \\
            &  &  &+ (H_{555} \Theta_{55} + 3 H_{55} \Theta_{555} + 3 H_5 \Theta_{5555} + H \Theta_{55555})dx_4 +  \Theta_{55555} dx_5\end{aligned}$$
$$\begin{aligned}
&d\Theta_{55555} &=& (-5 H_{555} \Theta_{55} - 15 H_{55} \Theta_{555} - 15 H_5 \Theta_{5555} - 5 H \Theta_{55555} - H_{5555} \Theta_{55} x_5 \\ 
& & &- 4 H_{555} \Theta_{555} x_5 - 6 H_{55} \Theta_{5555} x_5 - 4 H_5 \Theta_{55555} x_5 - H \Theta_{555555} x_5)dx_3 \\
       & & &+ (H_{5555} \Theta_{55} + 4 H_{555} \Theta_{555} + 6 H_{55} \Theta_{5555} + 4 H_5 \Theta_{55555} + H \Theta_{555555})dx_4 \\
       & & & + \Theta_{555555} dx_5\\
&d\Theta_{555555}&=&(-6 H_{5555} \Theta_{55} - 24 H_{555} \Theta_{555} - 36 H_{55} \Theta_{5555} - 24 H_5 \Theta_{55555} - 6 H \Theta_{555555} \\
       & & &- H_{55555} \Theta_{55} x_5 - 5 H_{5555} \Theta_{555} x_5 - 10 H_{555} \Theta_{5555} x_5 - 10 H_{55} \Theta_{55555} x_5 \\
          & & &- 5 H_5 \Theta_{555555} x_5 - H \Theta_{5555555} x_5)dx_3 \\
          & & &+ (H_{55555} \Theta_{55} + 5 H_{5555} \Theta_{555} + 10 H_{555} \Theta_{5555} + 10 H_{55} \Theta_{55555} \\
          & & & + 5 H_5 \Theta_{555555} + H \Theta_{5555555})dx_4 + \Theta_{5555555} dx_5\\
          \end{aligned} $$
$$ \begin{aligned}
&dH&=&H (H - H_5 x_5)dx_3 + H H_5 dx_4 + H_5 dx_5\\
&dH_5&=&(H H_5 - H_5^2 x_5 - H H_{55} x_5)dx_3 + (H_5^2 + H H_{55})dx_4 +  H_{55} dx_5\\
&dH_{55}&=&-(3 H_5 H_{55} + H H_{555})x_5 dx_3 + (3 H_5 H_{55} + H H_{555})dx_4 +  H_{555} dx_5\\
&dH_{555}&=&(-3 H_5 H_{55} - H H_{555} - 3 H_{55}^2 x_5 - 4 H_5 H_{555} x_5 -     H H_{5555} x_5)dx_3  \\
   & &  &+ (3 H_{55}^2 + 4 H_5 H_{555} + H H_{5555})dx_4 +  H_{5555} dx_5\\
&dH_{5555}&=&(-6 H_{55}^2 - 8 H_5 H_{555} - 2 H H_{5555} - 10 H_{55} H_{555} x_5 -5 H_5 H_{5555} x_5 \\
      & & &- H H_{55555} x_5)dx_3 \\
      & &  &+ (10 H_{55} H_{555} + 5 H_5 H_{5555} + H H_{55555})dx_4 + H_{55555} dx_5.
\end{aligned}
$$

  Now, we can calculate the components $A_i$, $i=1,2,...,5$ of the Cartan quartic for the twistor distribution associated with this Pleba\'nski function $\Theta$. We take the coframe given by (\ref{ft}) and (\ref{ft2}), and find a new coframe  (\ref{matrix}) such that (\ref{system}) is satisfied. It requires long steps of calculations to find such a coframe. The coframe we eventually found is shown below:
$$
\bma\theta^1\\\theta^2\\\theta^3\\\theta^4\\\theta^5\ema=
\bma 
\Theta_{5555} & 0& 0& 0& 0\\ 
0& -x_5& 1& 0& 0\\ 
0& 1& 0& 0&  0\\ 
b_{41} &b_{42} & \frac{-5 \Theta_{55555}^2 + 3 \Theta_{5555} \Theta_{555555} }{30 \Theta_{5555}^2} & 0& -\Theta_{5555}\\ 
b_{51}& b_{52}& 0& 1& 2 H \Theta_{55}
\ema
\bma\omega^1\\\omega^2\\\omega^3\\\omega^4\\\omega^5\ema,
$$
where 
     $$ \begin{aligned}
b_{41}&=-\frac{1}{15 \Theta_{5555}^2} (3 H_{5555} \Theta_{55}^2 \Theta_{5555} + 27 H_{555} \Theta_{55} \Theta_{555} \Theta_{5555}  +     45 H_{55} \Theta_{555}^2 \Theta_{5555} \\ 
     &+ 18 H_{55} \Theta_{55} \Theta_{5555}^2 + 45 H_{5} \Theta_{555} \Theta_{5555}^2 - 5 H_{555} \Theta_{55}^2 \Theta_{55555} \\ 
     &- 35 H_{55} \Theta_{55} \Theta_{555} \Theta_{55555}  - 40 H_{5} \Theta_{555}^2 \Theta_{55555} + 7 H_{5} \Theta_{55} \Theta_{5555} \Theta_{55555} \\ 
     & +     5 H \Theta_{555} \Theta_{5555} \Theta_{55555} + 5 H \Theta_{55} \Theta_{55555}^2 -     3 H \Theta_{55} \Theta_{5555} \Theta_{555555})\\
b_{42}&=\frac{10 \Theta_{5555} \Theta_{55555} + 5 \Theta_{55555}^2 x_5 - 3 \Theta_{5555} \Theta_{555555} x_5}{30 \Theta_{5555}^2}\end{aligned}$$
$$\begin{aligned}
b_{51}&=-\frac{1}{30 \Theta_{5555}^3} (-5 H_{555}^2 \Theta_{55}^4 - 70 H_{55} H_{555} \Theta_{55}^3 \Theta_{555}  - 245 H_{55}^2 \Theta_{55}^2 \Theta_{555}^2 \\
     &- 80 H_{5} H_{555} \Theta_{55}^2 \Theta_{555}^2  - 560 H_{5} H_{55} \Theta_{55} \Theta_{555}^3 - 320 H_{5}^2 \Theta_{555}^4 + 30 H_{55}^2 \Theta_{55}^3 \Theta_{5555} \\ 
     & +     20 H_{5} H_{555} \Theta_{55}^3 \Theta_{5555} - 6 H H_{5555} \Theta_{55}^3 \Theta_{5555} + 260 H_{5} H_{55} \Theta_{55}^2 \Theta_{555} \Theta_{5555} \\
     & - 44 H H_{555} \Theta_{55}^2 \Theta_{555} \Theta_{5555}  + 280 H_{5}^2 \Theta_{55} \Theta_{555}^2 \Theta_{5555} - 20 H H_{55} \Theta_{55} \Theta_{555}^2 \Theta_{5555} \\ 
     & +     80 H H_{5} \Theta_{555}^3 \Theta_{5555} - 35 H_{5}^2 \Theta_{55}^2 \Theta_{5555}^2  -     36 H H_{55} \Theta_{55}^2 \Theta_{5555}^2 \\
     & - 140 H H_{5} \Theta_{55} \Theta_{555} \Theta_{5555}^2  - 20 H^2 \Theta_{555}^2 \Theta_{5555}^2 + 30 H^2 \Theta_{55} \Theta_{5555}^3 \\ 
     & + 10 H H_{555} \Theta_{55}^3 \Theta_{55555} + 70 H H_{55} \Theta_{55}^2 \Theta_{555} \Theta_{55555}  + 80 H H_{5} \Theta_{55} \Theta_{555}^2 \Theta_{55555} \\
     & - 14 H H_{5} \Theta_{55}^2 \Theta_{5555} \Theta_{55555} - 10 H^2 \Theta_{55} \Theta_{555} \Theta_{5555} \Theta_{55555} - 5 H^2 \Theta_{55}^2 \Theta_{55555}^2 \\ 
     & +     3 H^2 \Theta_{55}^2 \Theta_{5555} \Theta_{555555})\\
b_{52}&=-\frac{1}{3 \Theta_{5555}^2} (-H_{555} \Theta_{55}^2 - 7 H_{55} \Theta_{55} \Theta_{555} - 8 H_{5} \Theta_{555}^2 + 5 H_{5} \Theta_{55} \Theta_{5555} \\
     &+ 4 H \Theta_{555} \Theta_{5555} + H \Theta_{55} \Theta_{55555}).
\end{aligned}
$$
Even in this restricted situation the computed Cartan's matrix coefficients $A_i$, $i=1,2,3 ,4, 5$, are too complicated to list them all here. However, surprisingly we found that 
$$ \begin{aligned}
A_5&=\frac{1}{100 \Theta5555^4} (175 \Theta_{55555}^4 - 280 \Theta_{5555} \Theta_{55555}^2 \Theta_{555555} + 49 \Theta_{5555}^2 \Theta_{555555}^2 \\
&+ 70 \Theta_{5555}^2 \Theta_{55555} \Theta_{5555555} - 10 \Theta_{5555}^3 \Theta_{55555555}).
\end{aligned}
$$
For $G_2$ flatness of the twistor distribution we need that all the $A_i$, $i=1,2,3,4,5$ must vanish. The form of the computed $A_5$ shows that Noth's equation
\be\begin{aligned}
  175 \Theta_{55555}^4 - 280 \Theta_{5555} &\Theta_{55555}^2 \Theta_{555555} + 49 \Theta_{5555}^2 \Theta_{555555}^2 +\\
&70 \Theta_{5555}^2 \Theta_{55555} \Theta_{5555555} - 10 \Theta_{5555}^3 \Theta_{55555555}=0,\end{aligned}\label{no5}\ee
for $\Theta_5$ is a necessary condition. Whether this condition is sufficient it is an open question.

Interestingly, there are three well-known solutions of the Monge-Ampere (\ref{ma1}):
\begin{enumerate}
\item $\Theta= \phi (C_1 x + C_2 y ) + C_3 x+ C_4 y +C_5 $
\item $\Theta= (C_1 x+ C_2 y)\phi \left( \frac{y}{x} \right) + C_3 x +C_4 y + C_5$ 
\item $\Theta= (C_1 x + C_2 y +C_3) \phi \left( \frac{C_4 x+ C_5 y + C_6}{C_1 x + C_2 y + C_3}\right) + C_7 x + C_8 y + C_9$
\end{enumerate}
Rewriting these solutions in terms of the variables $x_3,x_4$ and $x_5$ using $x=x_5$, $y=x_4-x_3 x_5$, we impose the Noth's equation (\ref{no5}) on such $\Theta$s, which in turn impose conditions on the free function $\phi$ appearing in (1), (2), (3). The results are as follows:
\begin{itemize}
\item[ad (1)] In this case all the $A_i$s, $i=1,2,3,4,5$, vanish provided that the function $\phi_t$ satisfies the Noth's ODE for the variable $t=C_1 x_5+C_2(x_4-x_3 x_5)$. Thus in this case Noth's ODE is \emph{sufficient} to guarantee the $G_2$ flatness of the twistor distribution. However, by a simple change of coordinates, one can convince himself that the heavenly metric corresponding to $\Theta=\phi (C_1 x + C_2 y ) + C_3 x+ C_4 y +C_5$ is the same as in Theorem \ref{theo31}.
\item[ad (2)-(3)] In these cases the equations $A_i=0$, $i=1,2,3,4,5$, cannot be satisfied by function $\phi$, which obey the generiticity condition $\Theta_{5555}\neq 0$. 
\end{itemize}

We actually have computed  $A_i$, $i=1,2,...,5$ for the Pleba\'nski metric satisfying the heavenly equations with heavenly function $\Theta$ depending on all the variables $(x,y,z,w)$. This was done with the help of Mathematica symbolic calculation program. It follows that in that, fully general case, the Noth equation (\ref{no5}) is \emph{still necessary} for the heavenly function $\theta$ to correspond to $G_2$ flat twistor distribution. It would be interesting to know if there are heavenly functions satisfying both (\ref{secpleb}) and (\ref{no5}) and for which the corresponding heavenly metric (\ref{ple}) is not equivalent to the one from Theorem \ref{theo31}. 

\section{Acknowledgements}
We take this oportunity to remember the work of Sotirios Bonanos. He made the exterior differential package for Mathematica, which was used by us for many years. Many times Sotirios updated the package responding to our requests. Without his work many papers of one of the authors (PN) would never been written. This referes also to the present paper.

\end{document}